\providecommand{\U}[1]{\protect\rule{.1in}{.1in}}
\newtheorem{theorem}{Theorem}
\newtheorem{theorem*}{Example}
\newtheorem{conjecture}[theorem]{Conjecture}
\newtheorem{lemma}[theorem]{Lemma}
\newtheorem{remark}[theorem]{Observation}
\newenvironment{proof}[1][Proof]{\noindent\textbf{#1.} }{\ \hfill \rule{0.5em}{0.5em}\bigskip}
\begin{document}

\title{A note on the locally irregular edge colorings of cacti}
\author{Jelena Sedlar$^{1,3}$,\\Riste \v Skrekovski$^{2,3}$ \\[0.3cm] {\small $^{1}$ \textit{University of Split, Faculty of civil
engineering, architecture and geodesy, Croatia}}\\[0.1cm] {\small $^{2}$ \textit{University of Ljubljana, FMF, 1000 Ljubljana,
Slovenia }}\\[0.1cm] {\small $^{3}$ \textit{Faculty of Information Studies, 8000 Novo
Mesto, Slovenia }}\\[0.1cm] }
\maketitle

\begin{abstract}
A graph is locally irregular if the degrees of the end-vertices of every edge
are distinct. An edge coloring of a graph $G$ is locally irregular if every
color induces a locally irregular subgraph of $G$. A colorable graph $G$ is
any graph which admits a locally irregular edge coloring. The locally
irregular chromatic index $\chi_{%
%TCIMACRO{\TeXButton{TeX field}{\rm{irr}}}%
%BeginExpansion
\rm{irr}%
%EndExpansion
}^{\prime}(G)$ of a colorable graph $G$ is the smallest number of colors
required by a locally irregular edge coloring of $G$. The Local Irregularity
Conjecture claims that all colorable graphs require at most $3$ colors for a
locally irregular edge coloring. Recently, it has been observed that the
conjecture does not hold for the bow-tie graph $B$, since $B$ is colorable and
requires at least $4$ colors for a locally irregular edge coloring. Since $B$
is a cactus graph and all non-colorable graphs are also cacti, this seems to
be a relevant class of graphs for the Local Irregularity Conjecture. In this
paper we establish that $\chi_{%
%TCIMACRO{\TeXButton{TeX field}{\rm{irr}}}%
%BeginExpansion
\rm{irr}%
%EndExpansion
}^{\prime}(G)\leq4$ for all colorable cactus graphs.

\end{abstract}

\textit{Keywords:} locally irregular edge coloring; Local Irregularity
Conjecture; cactus graphs.

\textit{AMS Subject Classification numbers:} 05C15

\section{Introduction}

All graphs mentioned in this paper are considered to be simple and finite. A
\emph{cactus graph} is any graph with edge disjoint cycles. A graph is said to
be \emph{locally irregular} if the degrees of the two end-vertices of every
edge are distinct. A \emph{locally irregular }$k$\emph{-edge coloring}, or
$k$\emph{-liec} for short, is any $k$-edge coloring of $G$ every color of
which induces a locally irregular subgraph of $G$. Since in this paper we deal
only with the locally irregular edge colorings, a graph which admits such a
coloring will be called \emph{colorable}. The \emph{locally irregular
chromatic index} $\chi_{%
%TCIMACRO{\TeXButton{TeX field}{\rm{irr}}}%
%BeginExpansion
\rm{irr}%
%EndExpansion
}^{\prime}(G)$ of a colorable graph $G$ is defined as the smallest $k$ such
that $G$ admits a $k$-liec. The first question is which graphs are colorable?
To answer this question, we first need to introduce a special class
$\mathfrak{T}$ of cactus graphs. The class $\mathfrak{T}$ is defined as follows:

\begin{itemize}
\item[R1.] The triangle $K_{3}$ is contained in $\mathfrak{T}$;

\item[R2.] For every graph $G\in\mathfrak{T}$, a graph $H$ which also belongs
to $\mathfrak{T}$ can be constructed in the following way: a vertex $u\in
V(G)$ of degree $2,$ which belongs to a triangle of $G,$ is identified with an
end-vertex of an even length path or with the only vertex of degree one in a
graph consisting of a triangle and an odd length path hanging at one of the
vertices of triangle.
\end{itemize}

\noindent Obviously, graphs from $\mathfrak{T}$ are subcubic cacti in which
cycles are vertex disjoint triangles, which are connected by an odd length
paths, and besides triangles and odd length paths connecting them a cactus
graph $G$ from $\mathfrak{T}$ may have only even length paths hanging at any
vertex $u$ such that $u$ belongs to a triangle of $G$ and $d_{G}(u)=3.$ It was
established that the only non-colorable graphs are odd length paths, odd
length cycles and cacti from $\mathfrak{T}$ \cite{Baudon5}. Before we proceed,
let us make the following straightforeward observation which will be used later.

\begin{remark}
\label{Observation_G0}Let $G$ be a non-colorable graph and let $e\in E(G).$ If
$e$ is an edge incident to a leaf or $e$ belongs to a cycle of $G,$ then $G-e$
is colorable.
\end{remark}

For colorable graphs an interesting question is what is the smallest number of
colors required by a locally irregular edge coloring of any graph. Regarding
this question, the following conjecture was proposed \cite{Baudon5}.

\begin{conjecture}
[Local Irregularity Conjecture]\label{Con_nonTareColorable}For every colorable
connected graph $G$, it holds that $\chi_{%
%TCIMACRO{\TeXButton{TeX field}{\rm{irr}}}%
%BeginExpansion
\rm{irr}%
%EndExpansion
}^{\prime}(G)\leq3.$
\end{conjecture}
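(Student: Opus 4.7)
The plan is to attempt the conjecture by induction on $|E(G)|$. The base cases cover the smallest colorable graphs, each of which can be verified directly by exhibiting an explicit $3$-liec. For the inductive step, given a colorable connected graph $G$, I would identify a \emph{reducible} substructure (most often a single edge $e$) such that $G-e$ is again colorable; Observation~\ref{Observation_G0} is tailor-made for this, ensuring that whenever $e$ is incident to a leaf or lies on a cycle, $G-e$ is colorable. I would then apply the inductive hypothesis to $G-e$ to obtain a $3$-liec and extend the coloring to $e$ in such a way that the monochromatic subgraphs of all three colors remain locally irregular.

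The proof would break naturally into cases based on the block-tree structure of $G$. Graphs with leaves are handled first by a pendant-edge reduction, after which we may assume minimum degree at least $2$. I would then treat pendant blocks: pendant cycle blocks can be removed and later recolored using at most two auxiliary colors, while $2$-edge-connected pendant blocks containing a chord provide a free edge for local recoloring. The remaining situations split into two principal subcases: either $G$ is itself $2$-edge-connected, where I would use an ear decomposition, coloring each ear alternately with two colors and reserving the third to repair parity mismatches at the two attachment vertices; or $G$ is a cactus, which I would attack by orienting the block-tree from an arbitrary root and assigning colors triangle by triangle according to a fixed local pattern, making sure that the extension at each new block uses only the colors already available. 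Trees serve as a warm-up case, handled by a standard bottom-up procedure that actually needs just two colors.

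The hard part, and the step I expect to dominate the work, is controlling what happens at vertices that lie on several short cycles sharing a common endpoint, since the three-color budget becomes extremely tight there. The canonical difficulty is already visible in the bow-tie graph mentioned in the abstract: two triangles meeting at a vertex $v$ of degree $4$, where the four edges at $v$ must be partitioned among the color classes so that each class is locally irregular at both triangle apices \emph{and} at $v$. Any purely local extension of the inductive coloring seems to break down in such configurations, so a successful proof would need a global swap-or-recolor argument along alternating monochromatic paths, redistributing colors whenever such a configuration is encountered during the induction. Making this swap argument work uniformly, especially in the presence of several bow-tie-like substructures, is in my view where the real content of a proof of the conjecture would lie.
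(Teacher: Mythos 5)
There is a fatal problem before any detail of your strategy matters: the statement you are trying to prove is false, and the paper says so explicitly. The bow-tie graph $B$ of Figure \ref{Fig_masna} is a colorable connected cactus with $\chi_{\mathrm{irr}}^{\prime}(B)=4$, as established in \cite{SedSkreMasna}; consequently no induction, however the cases are organized, can close. It is telling that your own write-up isolates exactly this configuration (two triangles meeting at a degree-$4$ vertex) as ``the canonical difficulty'' where ``any purely local extension of the inductive coloring seems to break down.'' That breakdown is not a technical obstacle to be overcome by a global swap-or-recolor argument along alternating paths --- it is a genuine obstruction, and a finite check shows that no $3$-liec of $B$ exists at all. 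The conjecture appears in the paper only as historical context for the amended Conjecture \ref{Con1}, which replaces the bound $3$ by $4$; the paper's actual result, Theorem \ref{Theorem_main}, proves $\chi_{\mathrm{irr}}^{\prime}(G)\leq 4$ for all colorable cacti by induction on the number of cycles, using end-grape decompositions and gluing colorings of the end-grape and its root component at a single cut vertex.

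A secondary point: your warm-up claim that trees ``actually need just two colors'' is also wrong. Theorem \ref{Tm_BaudonTree} gives $\chi_{\mathrm{irr}}^{\prime}(T)\leq 3$ for colorable trees, with the bound $2$ only when $\Delta(T)\geq 5$; there are colorable trees of maximum degree $3$ or $4$ that genuinely require three colors (these are exactly the inversion-resistant shrub-based colorings described in Observation \ref{Observation_sequences}). If you want to salvage the effort, the right target is the bound $4$, and the paper's end-grape induction is a workable template for that.
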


It was recently shown \cite{SedSkreMasna} that the Local Irregularity
Conjecture does not hold in general, since there exists a colorable cactus
graph, the so called bow-tie graph $B$ shown in Figure \ref{Fig_masna}, for
which $\chi_{%
%TCIMACRO{\TeXButton{TeX field}{\rm{irr}}}%
%BeginExpansion
\rm{irr}%
%EndExpansion
}^{\prime}(G)=4$. There, the following weaker version of Conjecture
\ref{Con_nonTareColorable} was proposed.

\begin{figure}[h]
\begin{center}
\includegraphics[scale=0.85]{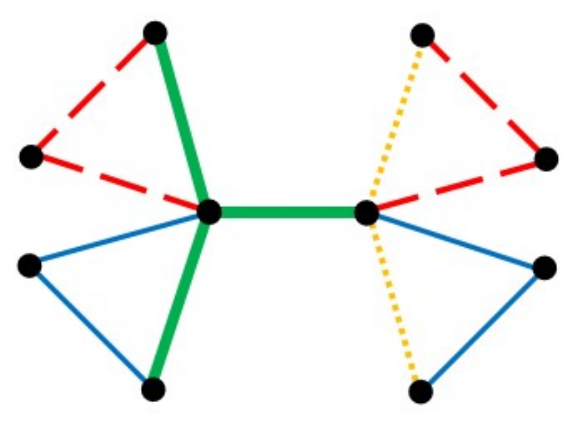}
\end{center}
\caption{The bow-tie graph $B$ and a $4$-liec of it.}%
\label{Fig_masna}%
\end{figure}

\begin{conjecture}
\label{Con1}Every colorable connected graph $G$ satisfies $\chi_{%
%TCIMACRO{\TeXButton{TeX field}{\rm{irr}}}%
%BeginExpansion
\rm{irr}%
%EndExpansion
}^{\prime}(G)\leq4.$
\end{conjecture}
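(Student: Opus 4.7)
The overall plan is induction on $|E(G)|$, driven by the structural classification of non-colorable graphs. Since the only non-colorable connected graphs are odd paths, odd cycles, and members of $\mathfrak{T}$, whenever edge deletion produces a graph in one of these three families I know exactly what obstruction has arisen. The scheme is to identify at each step a small edge set $F\subseteq E(G)$ whose removal leaves a graph that is either colorable with a $4$-liec by induction, or an explicit exceptional graph whose obstruction can be resolved by choosing the colors of $F$ appropriately. Remark~\ref{Observation_G0} is exactly what is needed here, because it guarantees that deleting a cycle-edge or a pendant-edge from any non-colorable $G$ produces a colorable graph, so passing from a ``bad'' subgraph back to a colorable one is always a cheap operation.

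First I would dispose of reducibility along bridges: if $uv$ is a bridge whose removal leaves components $G_1,G_2$, then by induction (or by the structural classification, if a component happens to be non-colorable) each $G_i$ carries a controlled $4$-liec, and the bridge is painted so that its color does not produce a degree equality at $u$ or $v$. This case already absorbs pendant paths and pendant triangles, which are the most common local obstructions. For the $2$-edge-connected case I would look for a short ear, a vertex of degree $2$, or a pair of adjacent low-degree vertices that are not ``trapped'' inside a $\mathfrak{T}$-type substructure; removing such a configuration, invoking induction, and then repainting the deleted part using the extra fourth color gives the required $4$-liec. For the remaining dense or highly connected regime, I would lean on the general decomposition theorems (the Bensmail--Merker--Thomassen bound and its successive refinements) and merge color classes down to four, exploiting the slack gained from working with $4$ rather than $3$ colors.

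The main obstacle is what I would call the \emph{boundary compatibility problem}: once induction produces a $4$-liec of $G-F$, each interface vertex $v$ already has a fixed vector of monochromatic degrees, and adding the edges of $F$ back must not create a degree equality on any edge incident to $v$. The bow-tie graph $B$ in Figure~\ref{Fig_masna} is precisely the witness that with only $3$ colors this compatibility can fail catastrophically, which is exactly why the conjectural bound had to be relaxed from $3$ to $4$. The technical heart of the proof therefore consists of a family of extendability lemmas asserting that, for every relevant reducible configuration $F$, the inductive $4$-liec of $G-F$ can be chosen so as to meet the boundary constraints imposed by the colors on $F$. I expect this to require a delicate case analysis of how the exceptional class $\mathfrak{T}$ can attach to the rest of $G$, and that is where almost all of the length and subtlety of the argument will live.
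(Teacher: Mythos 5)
This statement is a \emph{conjecture} in the paper, not a theorem: the authors only prove the bound $\chi_{\rm{irr}}'(G)\leq 4$ for colorable \emph{cactus} graphs (Theorem \ref{Theorem_main}), and the general statement remains open. So there is no proof in the paper to match your proposal against, and your proposal does not close the gap either --- it is a strategy outline whose load-bearing steps are asserted rather than proved. The decisive flaw is the final step for the ``dense or highly connected regime'': you propose to take the $220$-color decomposition of Bensmail--Merker--Thomassen and its refinements and ``merge color classes down to four.'' Merging color classes does not preserve local irregularity: the union of two locally irregular subgraphs is in general not locally irregular, because the degrees of the two classes add at each vertex and can create new equalities across edges. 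There is no known mechanism for collapsing $220$ classes to $4$, and if one existed the conjecture would already be settled. Similarly, the ``family of extendability lemmas'' you invoke for the boundary compatibility problem is exactly the open content of the conjecture; naming the obstacle is not the same as overcoming it.

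Your bridge-reduction and ear-removal steps are also not sound as stated. After deleting a bridge $uv$, the components need not be colorable (they may be odd paths, odd cycles, or members of $\mathfrak{T}$), and even when they are, an inductive $4$-liec of a component fixes the color multiset at $u$ with no freedom left; you would need to prove that some inductive coloring leaves a usable color at $u$, which is precisely where the paper's cactus argument spends all its effort (via shrub-based colorings, Observation \ref{Observation_maxDeg}, and the end-grape/root-component decomposition). If your goal is the cactus case, the paper's induction is on the number of cycles, peeling off an end-grape $G_{u}$, coloring the tree $T=G_{u}-E_{u}$ with colors $a,b,c$ controlled at the attachment vertices, coloring $E_{u}$ monochromatically, and gluing to a $4$-liec of the root component; your sketch would need to be developed to that level of specificity, and restricted to cacti, before it could be assessed as a proof.
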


Even though the graph $B$ contradicts Local Irregularity Conjecture, many
partial results support this conjecture. For example, the conjecture holds for
trees \cite{Baudon6}, unicyclic graphs and cacti with vertex disjoint cycles
\cite{SedSkreMasna}, graphs with minimum degree at least $10^{10}$
\cite{Przibilo}, $k$-regular graphs where $k\geq10^{7}$ \cite{Baudon5}. The
conjecture was also considered for general graphs, but there the upper bound
on the number of colors required by a liec of a colorable $G$ was first
established to be $328$ \cite{Bensmail}, and then it was lowered to $220$
\cite{Luzar}.

The results so far indicate that cacti are the relevant class of graphs for
the locally irregular edge colorings, since the non-colorable graphs are cacti
and the only known counterexample, graph $B$, for the Local Irregularity
Conjecture is also a cactus graph. Motivated by this, in this paper we will
further consider this class of graphs. We will show that all colorable cacti
satisfy Conjecture \ref{Con1}.

\section{Preliminaries}

Let us introduce some results already known in literature, mainly regarding
trees and unicyclic graphs, which will be of use in the rest of the paper.

A tree rooted at its leaf will be called a \emph{shrub}. The \emph{root edge}
of a shrub $T$ is the edge incident to the root vertex of $T.$ An edge
coloring of a shrub $T$ is said to be an \emph{almost locally irregular }%
$k$\emph{-edge coloring}, or $k$\emph{-aliec} for short, if either it is a
$k$-liec of $T$ or it is a coloring such that only the root edge is locally
regular. Let us state few results regarding trees from \cite{Baudon6}.

\begin{theorem}
\label{Tm_BaudonSchrub}Every shrub admits a $2$-aliec.
\end{theorem}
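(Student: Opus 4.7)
The plan is to proceed by induction on $n = |V(T)|$. For the base case $n = 2$, the shrub $T$ consists of a single edge, which is also the root edge; assigning it color $1$ makes both its endpoints have degree $1$, so the edge is locally regular. This is a $2$-aliec of the second type, where only the root edge is locally regular.

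For the inductive step, let $T$ be a shrub with $n \geq 3$, root leaf $r$, and root edge $rv$; then $d_T(v) \geq 2$. Denote by $u_1, \ldots, u_{d-1}$ the neighbors of $v$ other than $r$, where $d = d_T(v)$. For each $i$, let $T_i$ be the shrub obtained by taking $v$ together with the connected component of $T-v$ that contains $u_i$, rooted at $v$ with root edge $vu_i$. Each $T_i$ has strictly fewer vertices than $T$, so by the inductive hypothesis each admits a $2$-aliec $\varphi_i$. Observe that swapping the two colors inside any $\varphi_i$ yields another $2$-aliec of $T_i$, so we may independently choose the color $c_i \in \{1,2\}$ assigned to the root edge $vu_i$.

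The crux is to merge the colorings $\varphi_i$ into a $2$-aliec of $T$ and to pick a color $c_0 \in \{1,2\}$ for $rv$. Since the merge changes the degree of no vertex other than $v$, the only edges whose local (ir)regularity might change are those incident to $v$; the edge $vu_i$ is locally irregular in $T$ if and only if the degree of $v$ in color $c_i$ in $T$ differs from the degree of $u_i$ in color $c_i$ under $\varphi_i$. We therefore need to choose the tuple $(c_0, c_1, \ldots, c_{d-1}) \in \{1,2\}^{d}$ so that either every edge at $v$ is locally irregular (yielding a true $2$-liec of $T$) or only $rv$ is regular (yielding a type (b) $2$-aliec).

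The main obstacle is the case analysis distinguishing whether each $\varphi_i$ is a genuine $2$-liec of $T_i$, forcing the degree of $u_i$ in color $c_i$ to be at least $2$ (after a possible color swap), or has $vu_i$ as its unique regular edge, forcing that degree to equal $1$. Using the flexibility of choosing $c_i$ we distribute colors at $v$ so that the two resulting degrees of $v$ avoid all forbidden values dictated by the $u_i$; if no such distribution yields a full $2$-liec of $T$, we instead engineer the configuration so that the unique locally regular edge is $rv$ itself, producing a type (b) $2$-aliec. Since only two possible degrees of $v$ can arise (one per color) and there are $2^d$ configurations available, a direct case analysis on $d$ together with the multiset of degree constraints $\{d_i\}$ closes the induction.
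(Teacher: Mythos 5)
Your reduction is the right one, and it is essentially the strategy behind the result as it appears in the literature (note that the paper itself does not prove Theorem \ref{Tm_BaudonSchrub}; it only imports it from \cite{Baudon6}): decompose the shrub at the neighbour $v$ of the root $r$ into sub-shrubs $T_i$ rooted at $v$, take a $2$-aliec $\varphi_i$ of each, and observe that after merging only the edges incident to $v$ can lose local irregularity, each edge $vu_i$ imposing the single constraint $d_T^{c_i}(v)\neq d_i$, where $d_i=d_{T_i}^{c_i}(u_i)$ is a fixed forbidden value ($d_i=1$ when $\varphi_i$ is of the ``only the root edge regular'' type, $d_i\geq 2$ otherwise). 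Up to this point the argument is correct, including the base case.

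The gap is that you never prove the statement on which everything hinges: that for every $d\geq 2$ and every multiset of forbidden values $d_1,\dots,d_{d-1}\geq 1$ one can split the $d$ edges at $v$ into two colour classes of sizes $\alpha$ and $\beta=d-\alpha$ (with the root edge in, say, the $\alpha$-class) so that no sub-shrub with $d_i=\alpha$ lands in the $\alpha$-class and no sub-shrub with $d_i=\beta$ lands in the $\beta$-class, while $rv$ is either irregular or the unique regular edge. This is the entire content of the theorem, and it is not settled by ``a direct case analysis on $d$'': both $d$ and the multiset $\{d_i\}$ are unbounded, and the two extreme options (all edges in one colour; the root edge alone in its colour) genuinely fail in cases such as $d=3$ with $\{d_1,d_2\}=\{2,3\}$, or $d=4$ with $\{d_i\}=\{4,3,3\}$, where even the split $(\alpha,\beta)=(3,1)$ fails and one must find the balanced split $(2,2)$. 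The claim is true, but establishing it requires a genuine combinatorial argument (for instance: if every split $(\alpha,d-\alpha)$ were infeasible, the forced inequalities $n_d\geq 1$, $n_{d-1}\geq 1$, $n_{d-2}\geq 2,\dots$ on the counts $n_j=|\{i:d_i=j\}|$ would eventually exceed the total $d-1$), and no such argument, nor any substitute, appears in your write-up. Moreover, the remark that ``only two possible degrees of $v$ can arise'' while ``there are $2^d$ configurations available'' is not itself a proof: the constraints are attached to individual sub-shrubs rather than to the pair of degrees, so feasibility is a bipartition problem, not a pigeonhole count. As written, the proposal reduces the theorem to its hardest step and then asserts it.
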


\begin{theorem}
\label{Tm_BaudonTree}For every colorable tree $T,$ it holds that $\chi_{%
%TCIMACRO{\TeXButton{TeX field}{\rm{irr}}}%
%BeginExpansion
\rm{irr}%
%EndExpansion
}^{\prime}(T)\leq3.$ Moreover, $\chi_{%
%TCIMACRO{\TeXButton{TeX field}{\rm{irr}}}%
%BeginExpansion
\rm{irr}%
%EndExpansion
}^{\prime}(T)\leq2$ if $\Delta(T)\geq5.$
\end{theorem}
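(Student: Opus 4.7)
The plan is to deduce Theorem \ref{Tm_BaudonTree} from Theorem \ref{Tm_BaudonSchrub} by decomposing the tree at a maximum-degree vertex. Root $T$ at a vertex $v$ of maximum degree $\Delta=\Delta(T)$ with neighbours $w_1,\dots,w_\Delta$, and for each $i$ let $S_i$ denote the shrub consisting of the edge $vw_i$ together with the subtree of $T-v$ containing $w_i$, rooted at $v$. The shrubs $S_1,\dots,S_\Delta$ are edge-disjoint and together cover $T$, so any choice of colouring on each $S_i$ assembles into a colouring of $T$.

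First I apply Theorem \ref{Tm_BaudonSchrub} to obtain a $2$-aliec of each $S_i$ in colours $\{1,2\}$. For every vertex $u\neq v$ of $T$, the neighbourhood of $u$ in $T$ coincides with its neighbourhood inside the unique shrub containing it, so local irregularity at $u$ is inherited automatically from the aliec. The only place the assembled colouring can fail is at $v$: for each colour $c\in\{1,2\}$ and each root edge $vw_i$ of colour $c$, we need $d_c(v)\neq d_c(w_i)$, where $d_c(x)$ denotes the degree of $x$ in the subgraph of $T$ induced by colour $c$. A key source of flexibility is that swapping colours $1\leftrightarrow 2$ throughout any single shrub preserves its aliec status while toggling the colour of that shrub's root edge; this lets us tune the partition of $\{w_1,\dots,w_\Delta\}$ into the two root-edge colour classes, and hence the values $d_1(v)$ and $d_2(v)$.

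The bulk of the argument is then showing that one can choose these per-shrub swaps, together with possibly reapplying Theorem \ref{Tm_BaudonSchrub} to individual shrubs, so as to resolve every conflict at $v$. When $\Delta\geq 5$ there is enough room that a pigeonhole-style argument on the $\Delta+1\geq 6$ possible sizes of the colour-$1$ root-edge class, weighed against the forbidden values coming from $\{d_1(w_i)\}$ and $\{d_2(w_j)\}$, always delivers a safe partition and hence a $2$-liec. When $\Delta\leq 4$ the above strategy may fail to close with two colours; the remedy is to recolour one or two carefully chosen root edges with a third colour, keeping this new singleton colour class locally irregular (for instance, by recolouring in pairs so that $d_3(v)\neq 1$). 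The main obstacle is precisely this small-degree case analysis, since an aliec of a shrub may have its root edge as the unique locally regular edge, forcing $d_c(w_i)=1$ for the relevant colour, and these forced values must be compatible with the partition chosen at $v$; verifying in each of the finitely many small-$\Delta$ patterns that two swaps plus at most one third-colour move always succeed is the delicate step.
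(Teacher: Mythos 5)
First, note that the paper does not actually prove this theorem: it is imported from \cite{Baudon6}, and what the paper supplies in its Preliminaries is precisely the machinery of your plan (shrub-based colorings rooted at a maximum-degree vertex, inversions of individual shrubs, Observation \ref{Observation_sequences} identifying the two inversion-resistant patterns, and an explicit $3$-liec for those patterns). So your overall strategy is the standard one. The decomposition, the observation that only edges incident to $v$ can fail, and the $\Delta\geq5$ pigeonhole over the possible sizes of the colour-$1$ root-edge class are all sound and match the known argument.

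There is, however, a genuine gap in your small-$\Delta$ step. Recolouring only root edges with a third colour does not work as described. A single recoloured root edge forms an isolated edge in the new colour class, which is locally \emph{regular} (both endpoints have degree $1$), so it can never be used alone; you partly anticipate this. But even recolouring a pair $vw_i, vw_j$ fails in general: removing $vw_i$ from its original colour class decreases the old colour-degree of $w_i$ by one, which can create a new conflict at another edge $w_ix$ of that colour inside the shrub $S_i$ (the aliec only guarantees $d^a(x)\neq d^a(w_i)$ for the \emph{original} value of $d^a(w_i)$, and $d^a(x)=d^a(w_i)-1$ is entirely possible). For instance, in the inversion-resistant pattern with $a$-sequence $3,2,2$ at $d_T(v)=3$, the shrub whose root edge you recolour may have $w_i$ with $a$-degree $2$ and an $a$-neighbour of $a$-degree $1$, and your move equalises them. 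The correct repair, as recounted after Observation \ref{Observation_sequences}, is to introduce the third colour by replacing an entire colour class of one shrub (the coloring $\phi_{c,b}^{3}$ obtained from $\phi_{a,b}^{3}$ by renaming $a$ to $c$ throughout that shrub): this keeps all colour-degrees inside the shrub intact while freeing the root edge from the conflict at $v$. With that substitution, and an explicit verification of the two resistant patterns, your proof closes.
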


Beside Theorems \ref{Tm_BaudonSchrub} and \ref{Tm_BaudonTree}, we will also
need some specific claims used in \cite{Baudon6} to prove our results. In
order to state them, let us introduce some more notation. The colors of an
edge coloring will be denoted by letters $a,$ $b,$ $c,\ldots$ By $\phi_{a}(G)$
(resp. $\phi_{a,b}(G),$ $\phi_{a,b,c}(G)$) we will usually denote an edge
coloring of $G$ using one (resp. two, three) colors. For an edge coloring
$\phi_{a,b,c}$ of $G$ and a vertex $u\in V(G),$ by $\phi_{a,b,c}(u)$ we will
denote the set of colors incident to $u.$ If $\phi_{a,b,c}(u)$ contains $k$
colors, then we say $u$ is $k$\emph{-chromatic}, specifically in the cases of
$k=1$ and $k=2$ we say $u$ is \emph{monochromatic} and \emph{bichromatic},
respectively. If $\phi_{a,b}$ is an edge coloring of a graph $G$ which uses
colors $a$ and $b,$ then $\phi_{c,d}$ will denote the edge coloring of $G$
obtained from $\phi_{a,b}$ by replacing color $a$ with $c$ and color $b$ with
$d$. In particular, the edge coloring $\phi_{b,a}$ obtained from $\phi_{a,b}$
by swapping colors $a$ and $b$ will be called the \emph{inversion} of
$\phi_{a,b}.$

We also need to introduce a \emph{sum of colorings}, useful when combining two
different colorings. Let $G_{i},$ for $i=1,\ldots,k,$ be graphs with pairwise
disjoint edge sets, and let $G$ be a graph such that $E(G)=\cup_{i=1}%
^{k}E(G_{i}).$ If $\phi_{a,b,c}^{i}$ is an edge coloring of the graph $G_{i}$
for $i=1,\ldots,k,$ then $\phi_{a,b,c}=\sum_{i=1}^{k}\phi_{a,b,c}^{i}$ will
denote the edge coloring of $G$ such that $e\in E(G_{i})$ implies
$\phi_{a,b,c}(e)=\phi_{a,b,c}^{i}(e).$

The $a$\emph{-degree} of a vertex $u\in V(G)$ is defined as the number of
edges incident to $u$ which are colored by $a$, and it is denoted by
$d_{G}^{a}(u).$ The same name and notation goes for any other color besides
$a$. Now, for a vertex $u\in V(G)$ with $k$ incident edges colored by $a$, say
$uv_{1},\ldots,uv_{k},$ the $a$\emph{-sequence }is defined as $d_{G}^{a}%
(v_{1}),\ldots,d_{G}^{a}(v_{k}).$ It is usually assumed that vertices $v_{i}$
are denoted so that the $a$-sequence is non-increasing.

Regarding shrubs, they admit an $2$-aliec according to Theorem
\ref{Tm_BaudonSchrub}, which will be denoted by $\phi_{a,b}$ where we assume
the root edge of the shrub is colored by $a$. Now, let $T$ be a tree, $u\in
V(T)$ a vertex of maximum degree in $T,$ and $v_{i}$ all the neighbors of $u$
for $i=1,\ldots,k.$ Denote by $T_{i}$ a shrub of $T$ rooted at $u$ with the
root edge $uv_{i}.$ A \emph{shrub based} coloring of $T$ is defined by
$\phi_{a,b}=\sum_{i=1}^{k}\phi_{a,b}^{i},$ where $\phi_{a,b}^{i}$ is an
$2$-aliec of $T_{i}.$ Since we assume that the root edge of $T_{i}$ is colored
by $a$ in $\phi_{a,b}^{i},$ this implies $u$ is monochromatic in color $a$ by
a shrub based coloring $\phi_{a,b}$ of $T.$ Obviously, if a shrub based
coloring $\phi_{a,b}$ is not a liec of $T,$ only the edges incident to $u$ may
be locally regular by $\phi_{a,b}.$ Notice that $\binom{k}{2}$ different
$2$-edge colorings of $T$ can be obtained from a shrub based coloring
$\phi_{a,b}$ by swapping colors $a$ and $b$ in some of the shrubs $T_{i}$. If
none of those colorings is a liec of $T,$ we say $\phi_{a,b}$ is
\emph{inversion resistant}. The following observation was established in
\cite{Baudon6}.

\begin{remark}
\label{Observation_sequences}Let $T$ be a tree, $u\in V(T)$ a vertex of
maximum degree in $T$ and $\phi_{a,b}$ a shrub based coloring of $T$ rooted at
$u.$ The shrub based coloring $\phi_{a,b}$ will be inversion resistant in two
cases only:

\begin{itemize}
\item $d_{T}(u)=3$ and the $a$-sequence of $u$ by $\phi_{a,b}$ is $3,2,2$;

\item $d_{T}(u)=4$ and the $a$-sequence of $u$ by $\phi_{a,b}$ is $4,3,3,2$.
\end{itemize}
\end{remark}

The obvious consequence of the above observation is that in a tree $T$ with
$\chi_{%
%TCIMACRO{\TeXButton{TeX field}{\rm{irr}}}%
%BeginExpansion
\rm{irr}%
%EndExpansion
}^{\prime}(T)=3$ and $\Delta(T)=3$ (resp. $\Delta(T)=4$) the vertices of
degree $3$ (resp. $4$) must come in neighboring pairs. Also, if a shrub based
coloring of a tree $T$ rooted at a vertex $u$ of maximum degree is inversion
resistant, then $\chi_{%
%TCIMACRO{\TeXButton{TeX field}{\rm{irr}}}%
%BeginExpansion
\rm{irr}%
%EndExpansion
}^{\prime}(T)=3.$ A $3$-liec of such a tree $T$ is obtained from aliecs of
shrubs in a following way: if $d_{T}(u)=3$ then $\phi_{a,b,c}^{T}=\phi
_{a,b}^{1}+\phi_{b,a}^{2}+\phi_{c,b}^{3}$ is a $3$-liec of $T,$ if
$d_{T}(u)=4$ then $\phi_{a,b,c}^{T}=\phi_{a,b}^{1}+\phi_{a,b}^{2}+\phi
_{b,a}^{3}+\phi_{c,b}^{4}$ is a $3$-liec of $T.$ Notice that in this $3$-liec
of $T,$ only the root vertex $u$ is $3$-chromatic, all other vertices in $T$
are $1$- or $2$-chromatic. Hence, we will call $u$ the \emph{rainbow root} of
such a liec. Obviously, every vertex $u$ of maximum degree of a tree $T$ with
$\chi_{%
%TCIMACRO{\TeXButton{TeX field}{\rm{irr}}}%
%BeginExpansion
\rm{irr}%
%EndExpansion
}^{\prime}(T)=3$ can be the rainbow root of a $3$-liec of $T$, because either
the degree sequence of $u$ by a shrub based coloring is $3,2,2$ (resp.
$4,3,3,2$) which means the above $3$-liecs can be constructed with $u$ being
the rainbow root, or the shrub based coloring would not be inversion
resistant, which implies $\chi_{%
%TCIMACRO{\TeXButton{TeX field}{\rm{irr}}}%
%BeginExpansion
\rm{irr}%
%EndExpansion
}^{\prime}(G)\leq2,$ a contradiction.\ 

Further, notice that the color $c$ is used by $\phi_{a,b,c}^{T}$ in precisely
one shrub of $T.$ Simply, one can choose that one shrub to be any of the
shrubs of $T$ rooted at $u$. Let us now collect all this in the following
formal observation for further referencing.

\begin{remark}
\label{Observation_maxDeg}Let $T$ be a colorable tree with $\chi_{%
%TCIMACRO{\TeXButton{TeX field}{\rm{irr}}}%
%BeginExpansion
\rm{irr}%
%EndExpansion
}^{\prime}(T)=3.$ Then all vertices of maximum degree in $T$ come in
neighboring pairs. Also, every vertex of maximum degree in $T$ can be the
rainbow root of a $3$-liec of $T$. Finally, for any vertex $u$ of maximum
degree in $T$ and for any shrub $T_{i}$ of $T$ rooted at $u,$ there exists a
$3$-liec of $T$ such that the color $c$ is used only in $T_{i}$.
\end{remark}

Finally, we will also need the following result from \cite{SedSkreMasna}\ on
unicyclic graphs.

\begin{theorem}
\label{Tm_unicyclic}Let $G$ be a colorable unicyclic graph. Then $\chi_{%
%TCIMACRO{\TeXButton{TeX field}{\rm{irr}}}%
%BeginExpansion
\rm{irr}%
%EndExpansion
}^{\prime}(G)\leq3.$
\end{theorem}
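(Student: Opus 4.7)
The plan is to reduce the claim to the tree case of Theorem~\ref{Tm_BaudonTree} by cutting the cycle open, while handling the pure cycle case $G=C$ separately.

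If $G=C$, colorability forces $|C|$ to be even. When $|C|\equiv 0\pmod 4$, write $|C|=4m$ and color the cycle with the repeating pattern $(aabb)^m$: each color class decomposes into $m$ vertex-disjoint copies of $P_3$, and each $P_3$ is locally irregular, producing a $2$-liec. When $|C|\equiv 2\pmod 4$ and $|C|\ge 6$, write $|C|=4m+2$ and use the pattern $(aabb)^m cc$; the three color classes then partition the edges into disjoint $P_3$'s of colors $a$, $b$ and $c$, giving a $3$-liec.

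For the main case $G\ne C$, pick a cycle edge $e=uv$ and set $T=G-e$. Since some cycle vertex carries a hanging tree, $T$ contains a vertex of degree at least $3$, so $T$ is not a path and is therefore colorable; by Theorem~\ref{Tm_BaudonTree}, $T$ admits a $3$-liec $\phi_T$. It remains to pick a color $x\in\{a,b,c\}$ for $e$ so that the extension to $G$ is still locally irregular. Colors $y\ne x$ are unaffected, so only color $x$ needs to be checked: we need $d^x_T(u)\ne d^x_T(v)$ (for the edge $e$ itself), and $d^x_T(w)\ne d^x_T(u)+1$ for every $x$-neighbor $w$ of $u$ in $T$, with the analogous condition at $v$.

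The main obstacle is making this extension succeed simultaneously at both endpoints $u$ and $v$. For flexibility, I would choose $e$ so that one endpoint, say $u$, satisfies $d_G(u)=2$ whenever possible; then $u$ is a leaf of $T$, so for any color $x$ not incident to $u$ the condition at $u$ reduces to the single inequality $d^x_T(v)\ne 0$. When every cycle vertex carries a hanging tree, I would instead place $e$ adjacent to a suitably structured subtree and exploit the freedom in $\phi_T$ provided by Observations~\ref{Observation_sequences} and~\ref{Observation_maxDeg}: relocating the rainbow root to align $x$-degrees, swapping colors inside a shrub rooted at a nearby maximum-degree vertex, or switching to a different cycle edge $e$. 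A finite case analysis on the colors and $x$-degrees appearing at $u$ and $v$ should then close the argument.
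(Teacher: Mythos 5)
First, a remark on scope: the paper does not prove Theorem~\ref{Tm_unicyclic} at all, but imports it from \cite{SedSkreMasna}, so your proposal has to be judged on its own merits. Your handling of the pure cycle case is correct. The main case, however, contains a genuine gap, and it is not merely an unfinished case analysis: the strategy of deleting one cycle edge $e$, taking a $3$-liec of the tree $T=G-e$, and then extending it by choosing a color for $e$ provably fails for some colorable unicyclic graphs. Take $G$ to be the paw, i.e.\ a triangle $uvw$ with one pendant edge $wz$. This graph is colorable (color $wu,wz$ with $a$ and $wv,uv$ with $b$: the $a$-class is a $2$-star at $w$ and the $b$-class is the path $u,v,w$), and it is not in $\mathfrak{T}$ because the odd pendant path at $w$ does not terminate in a triangle. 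Deleting $uw$ or $vw$ leaves the path $u,v,w,z$ of odd length $3$, which is non-colorable, so the only admissible choice is $e=uv$, giving $T=K_{1,3}$. But the only locally irregular edge colorings of $K_{1,3}$ are the monochromatic ones (any color class of size one is an isolated, hence locally regular, edge), and a monochromatic $T$ extends in no way: giving $uv$ the color used on $T$ yields $d^a(u)=d^a(v)=2$, while giving it a fresh color creates an isolated edge. Hence no liec of $G$ restricts to a liec of $G-e$ for any cycle edge $e$, and none of the repairs you list (relocating the rainbow root, swapping colors inside shrubs, switching to another cycle edge) can help.

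The underlying obstruction is that in a correct coloring the cycle edge must often lie in the same color class as tree edges incident to the cycle, so the cycle cannot simply be cut open after the tree has been colored; the decomposition has to treat the cycle together with the trees hanging from it. This is essentially why the argument in \cite{SedSkreMasna} is considerably more involved than a one-edge extension. A workable variant of your idea would have to delete a cycle edge and simultaneously recolor a neighborhood of its endpoints, or isolate triangles and short cycles with pendant trees as separate base cases; as written, the reduction does not go through.
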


\section{Coloring cacti by four colors}

A \emph{grape} $G$ is any cactus graph with at least one cycle in which all
cycles share a vertex $u,$ and the vertex $u$ is called the \emph{root} of
$G.$ A \emph{berry} $G_{i}$ of a grape $G$ is any subgraph of $G$ induced by
$V(G_{i}^{\prime})\cup\{u\},$ where $u$ is the root of $G$ and $G_{i}^{\prime
}$ a connected component of $G-u.$ Notice that a berry $G_{i}$ can be either a
unicyclic graph in which $u$ is of degree $2$ or a tree in which $u$ is a
leaf, so such berries will be called \emph{unicyclic berries} and
\emph{acyclic berries}, respectively. A unicyclic berry $G_{i}$ is said to be
\emph{triangular} if its cycle is the triangle.

\begin{figure}[h]
\begin{center}
\includegraphics[scale=0.75]{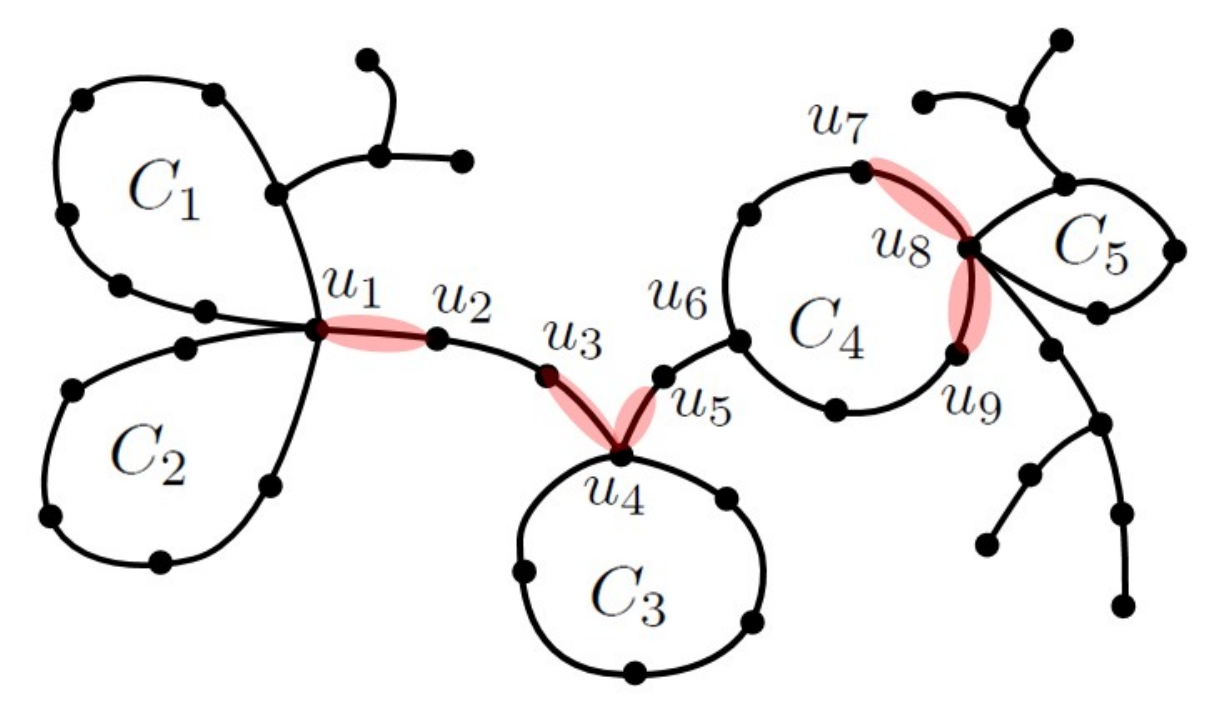}
\end{center}
\caption{A cactus graph $G$ with five cycles which contains two end-grapes,
$G_{u_{1}}$ and $G_{u_{8}}.$ The end-grape $G_{u_{1}}$ has two unicyclic
berries and one exit edge $u_{1}u_{2}.$ The end-grape $G_{u_{8}}$ consists of
one unicyclic and one acyclic berry and it has two exit edges $u_{8}u_{7}$ and
$u_{8}u_{9}.$ Notice that the cycle $C_{3}$ is not an end-grape of $G$ since
it has two exit edges $u_{4}u_{3}$ and $u_{4}u_{5}$ and they do not belong to
a same cycle.}%
\label{Fig_endCycles}%
\end{figure}

An \emph{end-grape} $G_{u}$ of a cactus graph $G$ is any subgraph of $G$ such that:

\begin{itemize}
\item $G_{u}$ is a grape rooted at $u$ where $u$ is the only vertex of $G_{u}$
incident to edges from $G-E(G_{u}),$ and

\item $u$ is incident to either one edge from $G-E(G_{u})$ or two such edges
which then must belong to a same cycle of $G-E(G_{u}),$ and such edges are
called the \emph{exit edges} of $G_{u}$.
\end{itemize}

\noindent This notion is illustrated by Figure \ref{Fig_endCycles}. Also, for
an end-grape $G_{u}$ rooted at $u,$ the graph $G_{0}=G-E(G_{u})$ will be
called the \emph{root component} of $G_{u}$. Notice that $d_{G_{0}}(u)\leq2$
and also that $E(G)=E(G_{0})\cup E(G_{u})$ where $E(G_{0})\cap E(G_{u}%
)=\emptyset.$ Let us first prove the following auxiliary result.

\begin{lemma}
\label{Lemma_nonColorable}Let $G$ be a colorable cactus graph with $c\geq2$
cycles and $G_{u}$ an end-grape of $G$. If $G_{u}$ is comprised of just one
berry and that berry is a non-colorable triangular berry which has only one
exit edge, then there exists a colorable cactus graph $G_{0}^{\prime}$ with
fewer cycles than $G,$ such that $\chi_{%
%TCIMACRO{\TeXButton{TeX field}{\rm{irr}}}%
%BeginExpansion
\rm{irr}%
%EndExpansion
}^{\prime}(G)\leq\max\{\chi_{%
%TCIMACRO{\TeXButton{TeX field}{\rm{irr}}}%
%BeginExpansion
\rm{irr}%
%EndExpansion
}^{\prime}(G_{0}^{\prime}),3\}.$
\end{lemma}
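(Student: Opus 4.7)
My plan is to split into cases based on whether the graph $G_{0}:=G-E(G_{u})$ (in which $u$ is a leaf) is colorable, and in each case to produce a colorable cactus $G_{0}^{\prime}$ with $c-1$ cycles to which an arbitrary liec of $G_{0}^{\prime}$ can be extended. The uniform extension colors the three triangle edges by
\[
\phi(uv_{1})=a,\qquad \phi(uv_{2})=b,\qquad \phi(v_{1}v_{2})=b,\qquad b\neq a,
\]
where $a$ denotes the color of the exit edge $uv$ in the extended coloring. Under this pattern $u$ ends up with $a$-degree $2$ (from $uv$ and $uv_{1}$) and $b$-degree $1$, the vertex $v_{1}$ has $a$- and $b$-degree both $1$, and $v_{2}$ has $b$-degree $2$, so the three triangle edges are automatically locally irregular. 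Hence the only remaining conditions are that $uv$ stays locally irregular in $G$ and that the new $b$-edge $uv_{2}$ at $u$ does not spoil any previously colored $b$-edge at $u$.

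If $G_{0}$ is colorable I take $G_{0}^{\prime}=G_{0}$. The LI-requirement on $uv$ inside $G_{0}$ already forces $d_{G_{0}}^{a}(v)\geq 2$, and the pattern works whenever $d_{G_{0}}^{a}(v)\geq 3$, since $u$'s $a$-degree in $G$ equals $2$. If $G_{0}$ is not colorable, then, because $G_{0}$ has $c-1\geq 1$ cycles and a leaf, $G_{0}\in\mathfrak{T}$ with $u$ at the end of an even pendant path, and I set $G_{0}^{\prime}:=G_{0}\cup\{uu'\}$ for a new leaf $u'$. This extends the pendant path to odd length, removes $G_{0}^{\prime}$ from $\mathfrak{T}$, and makes $G_{0}^{\prime}$ colorable. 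In any liec of $G_{0}^{\prime}$, local irregularity of $uu'$ combined with $u$ having degree $2$ and $u'$ being a leaf forces $\phi(uv)=\phi(uu')=a$. Replacing $uu'$ by the triangle then preserves $u$'s $a$-degree, so every edge of $G_{0}$ remains locally irregular; and since $u$ had no $b$-edge in $G_{0}^{\prime}$, introducing the new $b$-edge at $u$ creates no conflict.

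The delicate sub-case is when $G_{0}$ is colorable but $d_{G_{0}}^{a}(v)=2$ in every liec of $G_{0}$; this happens in particular when $v$ has degree $2$ in $G_{0}$, which forces both edges at $v$ to share the color $a$. To handle it I plan to switch to $G_{0}^{\prime}:=G_{0}\cup\{vv'\}$, attaching a new pendant edge at $v$ rather than at $u$. Since $v$ has $u$ (a leaf) as one of its two neighbors, $v$ cannot lie on any cycle of $G_{0}$, so $G_{0}^{\prime}$ stays outside $\mathfrak{T}$ and hence is colorable, with $c-1$ cycles. The extra degree of freedom at $v$ then permits a liec of $G_{0}^{\prime}$ in which $\phi(uv)=\phi(vv')=a$ while $\phi(vw)\neq a$ (with $w$ being $v$'s other neighbor); removing $vv'$ drops the $a$-degree of $v$ in $G$ to $1$, which, combined with $u$'s $a$-degree $2$ from the triangle pattern, keeps $uv$ locally irregular. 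Producing this particular liec of $G_{0}^{\prime}$ by a local recoloring argument around $v$, possibly exploiting the third available color within the budget $\max\{\chi_{\mathrm{irr}}^{\prime}(G_{0}^{\prime}),3\}$, is the main technical step I expect to wrestle with.
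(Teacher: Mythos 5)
Your route is genuinely different from the paper's, and as written it has two real gaps. First, a non-colorable triangular berry is not just the triangle $uv_{1}v_{2}$: by the structure of $\mathfrak{T}$ it may carry an even-length pendant path at $v_{1}$ and/or $v_{2}$, and your ``uniform extension'' colors only the three triangle edges and never addresses these paths. This is not cosmetic: your degree bookkeeping ($v_{1}$ having $a$- and $b$-degree $1$, $v_{2}$ having $b$-degree $2$) breaks once a path hangs at $v_{1}$, because the forced pair-coloring of a pendant path gives $v_{1}$ a second edge in whatever color its first path edge receives; starting that path with $a$ makes $uv_{1}$ locally regular (both ends get $a$-degree $2$) and starting it with $b$ kills $v_{1}v_{2}$, so a third color is needed there and the argument must say so. Second, and more seriously, your ``delicate sub-case'' is exactly where the difficulty of the lemma lives and you explicitly leave it open: you need a liec of $G_{0}+vv'$ with $\phi(uv)=\phi(vv')=a$ and $\phi(vw)\neq a$, but nothing you wrote excludes that every liec of $G_{0}+vv'$ colors all three edges at $v$ with $a$, in which case deleting $vv'$ leaves $v$ with $a$-degree $2$, matching $u$'s $a$-degree $2$ from the triangle pattern and making $uv$ locally regular again. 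Note also that your colorability argument for $G_{0}+vv'$ relies on $v$ having degree $2$ in $G_{0}$, whereas the sub-case you must cover is ``$d_{G_{0}}^{a}(v)=2$ in every liec,'' which can occur for $v$ of larger degree lying on a cycle; since attaching a pendant edge to a colorable graph can produce a member of $\mathfrak{T}$, colorability of $G_{0}+vv'$ would then need a separate justification.

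The paper sidesteps all of this with a different cut: instead of detaching the whole berry, it deletes the single triangle vertex $v_{1}$ (called $v$ there), so the surviving component $G_{0}'$ is $G_{0}$ together with a pendant odd path consisting of $uv_{2}$ and the even path at $v_{2}$. This $G_{0}'$ is automatically colorable --- were it in $\mathfrak{T}$, then $G$ itself would be non-colorable --- and the removed part is a tree (the path $u v_{1} v_{2}$ plus the even path at $v_{1}$) meeting $G_{0}'$ only at $u$ and $v_{2}$, which admits a $2$-liec whose color at both attachment vertices is a single fresh color $c$. Summing the two colorings then requires no analysis of $a$-degrees of $v$'s neighbors at all. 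If you want to complete your argument you must both color the berry's pendant paths and actually prove existence of the special liec in the delicate sub-case; adopting the paper's choice of $G_{0}'$ eliminates both obligations.
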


\begin{proof}
Denote by $v$ and $w$ be the neighbors of $u$ in $G_{u}.$ Since $G_{u}$ is
non-colorable triangular berry, it consists of the triangle $uvw$ and possibly
an even length path hanging at $v$ and/or $w$. Let $G_{0}^{\prime}$ be the
connected component of $G-v$ which contains $u.$ Notice that $G_{0}^{\prime}$
consists of a root component $G_{0}$ of $G_{u}$ and a pendant odd length path
hanging at $u$ which contains $w$. If $G_{0}^{\prime}$ is non-colorable, since
it contains a pendant path we conclude $G_{0}^{\prime}\in\mathfrak{T},$ but
then $G$ is also non-colorable, a contradiction. So, we may assume
$G_{0}^{\prime}$ is colorable and admits a liec which uses say colors $a$ and
$b,$ so we will denote it by $\phi_{a,b}^{0}.$ Since $u$ and $w$ belong to a
pendant path $P$ of $G_{0}^{\prime},$ given the parity of their distance to
the leaf of the path $P$, we may assume $\phi_{a,b}^{0}(u)=\{a\}$ and
$\phi_{a,b}^{0}(w)\subseteq\{a,b\}.$ Let $T=G-E(G_{0}^{\prime})$ and notice
that $T$ is a tree which consists of the path $uvw$ with possibly one even
length path hanging at $v.$ Obviously, $T$ admits a $2$-liec $\phi_{b,c}^{T}$
such that $\phi_{b,c}^{T}(u)=\phi_{b,c}^{T}(w)=\{c\}.$ Since $G$ consists of
$G_{0}^{\prime}$ and $T$ which meet at vertices $u$ and $w,$ we conclude that
$\phi_{a,b}^{0}+\phi_{b,c}^{T}$ is a $k$-liec of $G$, where $k=\max\{\chi_{%
%TCIMACRO{\TeXButton{TeX field}{\rm{irr}}}%
%BeginExpansion
\rm{irr}%
%EndExpansion
}^{\prime}(G_{0}^{\prime}),3\}.$
\end{proof}

Before we proceed with the main result, let us introduce the notation we will
use in a cactus graph with an end-grape $G_{u}$ and its root component
$G_{0}.$ Let $p$ (resp. $q$) denote the number of cyclic (resp. acyclic)
berries in $G_{u}$. We may assume berries of $G_{u}$ are denoted by $G_{i}$,
for $i=1,\ldots,p+q,$ so that $G_{i}$ is a cyclic berry if and only if $i\leq
p.$ Let $v_{i}$ be a neighbor of $u$ in a berry $G_{i}$ for $i=1,\ldots,p+q.$
Let $w_{i}$ be the other neighbor of $u$ in a cyclic berry $G_{i}$ for
$i=1,\ldots,p.$ The neighbors of $u$ in the root component $G_{0}$ will be
denoted by $u_{1}$ and $u_{2},$ where $u_{2}$ does not exist if $d_{G_{0}%
}(u)=1.$

\begin{theorem}
\label{Theorem_main}Let $G$ be a colorable cactus graph. Then $\chi_{%
%TCIMACRO{\TeXButton{TeX field}{\rm{irr}}}%
%BeginExpansion
\rm{irr}%
%EndExpansion
}^{\prime}(G)\leq4$\emph{.}
\end{theorem}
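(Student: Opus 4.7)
The plan is to proceed by induction on the number $c$ of cycles in the colorable cactus graph $G$. The cases $c=0$ and $c=1$ are already handled (with the stronger bound $3$) by Theorem~\ref{Tm_BaudonTree} and Theorem~\ref{Tm_unicyclic} respectively. For the inductive step with $c\geq 2$, there are two possibilities. Either $G$ itself is a grape (all its cycles meet at a common vertex $u$), or $G$ contains an end-grape $G_u$ as a proper subgraph, and then the root component $G_0=G-E(G_u)$ is non-empty and has strictly fewer cycles than $G$. The first possibility is the common generalization of the bow-tie obstruction and must be handled by a direct construction, gluing together carefully chosen colorings of the individual berries at $u$ using up to four colors in such a way that the $\alpha$-degree of $u$ differs from the $\alpha$-degree of every neighbor of $u$, for each color $\alpha$ used at $u$.

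In the main case, when an end-grape $G_u$ exists, the idea is to build a liec of $G$ as a sum of a liec of $G_0$ (or a small modification $G_0'$) and individually chosen colorings of the berries of $G_u$. If $G_0$ is colorable, the inductive hypothesis gives a $4$-liec of $G_0$, and often one can even use a $3$-liec of $G_0$ whose colors around $u$ are controlled via Observation~\ref{Observation_maxDeg}, by placing $u_1$ (or a nearby vertex of maximum degree) as a rainbow root. Each berry of $G_u$ is then colored separately: acyclic berries are shrubs rooted at $u$ and admit a $2$-aliec by Theorem~\ref{Tm_BaudonSchrub} with any prescribed color on the root edge; colorable cyclic berries admit a $3$-liec by Theorem~\ref{Tm_unicyclic}, again with some flexibility in the colors incident to $u$; non-colorable triangular berries with a single exit edge are reduced through Lemma~\ref{Lemma_nonColorable}. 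The partial colorings are summed so that the fourth color $d$ is used in at most one berry, and inversions $a\leftrightarrow b$ on individual berries or shrubs are employed to tune the $a$- and $b$-degrees of $u$ and its neighbors until local irregularity at $u$ is satisfied.

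The main obstacle is the situation in which $G_0$ (or the natural modification $G_0'$ obtained by absorbing a portion of a problematic berry as in Lemma~\ref{Lemma_nonColorable}) turns out to be non-colorable, i.e.\ an odd path, an odd cycle, or a member of the class $\mathfrak{T}$. By Observation~\ref{Observation_G0} one can then delete a suitable edge $e$ of $G_0$, either incident to a leaf or lying on a cycle, to recover a colorable graph, apply induction to it, and compensate by recoloring $e$ with the extra fourth color $d$; this way only one extra color is ever spent globally. The most delicate configurations are those echoing the bow-tie phenomenon, namely end-grapes containing several non-colorable triangular berries at $u$, or a non-colorable triangular berry attached to a root component that itself lies in $\mathfrak{T}$; each such configuration will be reduced either to Lemma~\ref{Lemma_nonColorable} applied to one particular berry or to the inductive hypothesis applied to a cactus obtained by trimming a berry and using Observation~\ref{Observation_G0}. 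Once every combination of these cases is checked, the desired bound $\chi_{\rm irr}'(G)\leq 4$ follows.
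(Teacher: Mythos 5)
Your outline shares the paper's skeleton (induction on the number of cycles, splitting off an end-grape $G_u$ from its root component $G_0$, invoking Lemma~\ref{Lemma_nonColorable} and Observation~\ref{Observation_maxDeg}), but it is missing the one structural idea that makes the induction close, and it contains a move that is actually invalid. The paper never colors the berries of $G_u$ individually. Instead it removes the set $E_u$ consisting of one edge incident to $u$ from each cycle of $G_u$ (respectively of $G$, in the grape case); what remains of $G_u$ is a \emph{tree} $T$, which can be $3$-colored with full control of the colors appearing at $u,v_1,\ldots,v_p$ via a suitable choice of rainbow root, while $E_u$ itself is a star at $u$ with $p\geq 2$ edges and hence a locally irregular graph that absorbs one further color. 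Your plan of summing a $2$-aliec for each acyclic berry and a $3$-liec (from Theorem~\ref{Tm_unicyclic}) for each unicyclic berry does not work as stated: Theorem~\ref{Tm_unicyclic} gives you no control over which colors meet $u$ or over the color-degrees of $u$'s two neighbors inside each cyclic berry, and when several such colorings are glued at $u$ the color-degree of $u$ can collide with that of a neighbor in ways that inversions of individual berries cannot always repair. You assert this can be "tuned," but that is precisely the hard part, and the whole point of the $E_u$ device is to avoid having to do it. The same gap appears in your grape case, where you only promise "a direct construction."

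The concretely wrong step is your treatment of a non-colorable $G_0$: you propose to delete an edge $e$ guaranteed by Observation~\ref{Observation_G0}, color the rest by induction, and "compensate by recoloring $e$ with the extra fourth color $d$." A color class consisting of a single edge is a copy of $K_2$, whose two endpoints both have degree $1$ in that class; it is locally \emph{regular}, so giving $e$ a color used nowhere near it is never a legal move. (The paper does once recolor a single edge $uv_1$ with $d$, but only in a situation where $d$ is already present on the adjacent edge $u_2u$ with a different $d$-degree, which is what saves it.) The paper instead observes that when $G$ is not a grape a non-colorable $G_0$ must lie in $\mathfrak{T}$, hence contains an end-grape of $G$ that is a single non-colorable triangular berry with one exit edge, and then Lemma~\ref{Lemma_nonColorable} applies directly; similarly, in the $p=1$ subcase it attaches the pendant edge $uv_1$ to $G_0$ and applies induction to $G_0+uv_1$. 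Without the $E_u$ reduction and with the invalid single-edge recoloring, your case analysis does not close.
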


\begin{proof}
The proof is by the induction on the number of cycles in $G.$ If $G$ contains
less than $2$ cycles, then $G$ is a tree or a unicyclic graph, so the claim
follows from Theorems \ref{Tm_BaudonTree} and \ref{Tm_unicyclic},
respectively. Let us assume that the claim holds for all cacti with less than
$c\geq2$ cycles, and let $G$ be a cactus graph with $c$ cycles.

If $G$ is a grape with the root vertex $u,$ let $E_{u}$ be the set consisting
of an edge incident to $u$ from every cycle in $G.$ Since $G$ contains at
least two cycles, $E_{u}$ contains at least two edges, hence $E_{u}$ induces a
locally irregular subgraph of $G.$ Notice that $T=G-E_{u}$ is a tree. If $T$
is colorable, then it can be colored by $3$ colors, so coloring $E_{u}$ by the
fourth color yields a $4$-liec of $G$. If $T$ is non-colorable, then $T$ is a
path, and there exists an edge $uz$ in $T$ such that $uz\not \in E_{u}$ and
$T^{\prime}=T-uz$ is a collection of one or two even length paths. Thus
$T^{\prime}$ is colorable and can be colored by $2$ colors, then coloring
$E_{u}^{\prime}=E_{u}\cup\{uz\}$ by the third color yields a $3$-liec of $G$.

Assume now that $G$ is not a grape, so let $G_{u}$ be an end-grape of $G$ and
let $G_{0}$ be the root component of $G_{u}.$ If $G_{0}$ is non-colorable,
since $G$ is not a grape, it follows that $G_{0}\in\mathfrak{T}$. So, within
$G_{0}$ there must exist an end-grape of $G$ consisting of a single
non-colorable triangular berry with only one exit edge. Then the claim follows
from Lemma \ref{Lemma_nonColorable} and the induction hypothesis. So, let us
assume that $G_{0}$ is colorable. Let $E_{u}=\{uv_{1},\ldots,uv_{p}\}$ and let
$T=G_{u}-E_{u}.$ Notice that $T$ is a tree.

\medskip\noindent\textbf{Case 1:} $T$\emph{ is not colorable}. Since $T$ is a
non-colorable tree, it must be an odd length path. Notice that there exists an
edge $uz$ in $T,$ such that $T^{\prime}=T-uz$ is a collection of one or two
even length paths. Hence, $T^{\prime}$ admits a $2$-liec $\phi_{a,b}%
^{T^{\prime}}.$ Since the degree of $u$ in $T^{\prime}$ is at most one we may
assume $\phi_{a,b}^{T^{\prime}}(u)\subseteq\{a\}.$ Let $E_{u}^{\prime}%
=E_{u}\cup\{uz\}$ and notice that $E_{u}^{\prime}$ contains at least two
edges, so it induces a locally irregular subgraph of $G$ which admits a
$1$-liec $\phi_{c}^{E^{\prime}}.$ Hence, $\phi_{a,b,c}^{u}=\phi_{a,b}%
^{T^{\prime}}+\phi_{c}^{E^{\prime}}$ is a $3$-liec of $G_{u}$ such that
$\phi_{a,b,c}^{u}(u)\subseteq\{a,c\}.$

It remains to consider the root component $G_{0}$ of $G_{u}.$ Since $G_{0}$ is
colorable, the induction hypothesis implies $G_{0}$ admits a $4$-liec
$\phi_{a,b,c,d}^{0}.$ Also, $d_{G_{0}}(u)\leq2$ implies that we may assume
$\phi_{a,b,c,d}^{0}(u)\subseteq\{b,d\}.$ Since $G$ is comprised of $G_{0}$ and
$G_{u}$ which meet at $u,$ from $\phi_{a,b,c,d}^{0}(u)\subseteq\{b,d\}$ and
$\phi_{a,b,c}^{u}(u)\subseteq\{a,c\}$ we conclude that $\phi_{a,b,c,d}%
^{0}+\phi_{a,b,c}^{u}$ is the desired $4$-liec of $G.$

\medskip\noindent\textbf{Case 2:} $T$\emph{ is colorable}. Denote by
$\phi_{a,b,c}^{T}$ a liec of $T,$ and if $T$ is colorable by fewer than $3$
colors, we assume the color $c$ (resp. $b$ and $c$) is not used. We wish to
establish that there exists a liec $\phi_{a,b,c}^{T}$ of $T$ such that
\begin{equation}
\phi_{a,b,c}^{T}(x)\subseteq\{a,b\} \label{For_abT}%
\end{equation}
for each $x\in\{u,v_{1},\ldots,v_{p}\}.$ Now, if $\chi_{%
%TCIMACRO{\TeXButton{TeX field}{\rm{irr}}}%
%BeginExpansion
\rm{irr}%
%EndExpansion
}^{\prime}(T)\leq2,$ then (\ref{For_abT}) obviously holds. So, let us assume
$\chi_{%
%TCIMACRO{\TeXButton{TeX field}{\rm{irr}}}%
%BeginExpansion
\rm{irr}%
%EndExpansion
}^{\prime}(T)=3.$ Notice that vertices $u,v_{1},\ldots,v_{p}$ form an
independent set in $T.$ Thus, Observation \ref{Observation_maxDeg} implies we
can choose the rainbow root $z$ in $T$ distinct from $u,v_{1},\ldots,v_{p}$.
Notice that $u$ and all $v_{i}$ for $i\leq p$ belong to at most two distinct
shrubs of $T$ rooted at $z.$ Since $T$ has at least three shrubs rooted at
$z,$ Observation \ref{Observation_maxDeg} implies the color $c$ can be used
only in the shrub not containing vertices $u$ and $v_{i}$ for $i\leq p.$ Thus,
we obtain a $3$-liec of $T$ for which (\ref{For_abT}) holds, and the claim is established.

\medskip\noindent\textbf{Case 2a: }$p\geq2.$ Since $\left\vert E_{u}%
\right\vert =p\geq2$, the set $E_{u}$ induces a locally irregular subgraph of
$G$ which admits a $1$-liec $\phi_{c}^{E}.$ Since (\ref{For_abT}) holds, we
conclude that $\phi_{a,b,c}^{u}=\phi_{a,b,c}^{T}+\phi_{c}^{E}$ is a $3$-liec
of $G_{u}$. It remains to glue $\phi_{a,b,c}^{u}$ with a coloring of $G_{0}.$

Since $G_{0}$ is colorable, the induction hypothesis implies there exists a
$4$-liec $\phi_{a,b,c,d}^{0}$ of $G_{0}.$ If $u$ is $1$-chromatic by
$\phi_{a,b,c,d}^{0},$ we may assume $\phi_{a,b,c,d}^{0}(u)=\{d\},$ so
$\phi_{a,b,c,d}^{0}+\phi_{a,b,c}^{u}$ is the desired $4$-liec of $G.$ If $u$
is $2$-chromatic by $\phi_{a,b,c,d}^{0}$, we may assume that $u_{1}u$ is
$c$-colored, and $u_{2}u$ is $d$-colored. When we glue together $G_{u}$ and
$G_{0}$ with their respective colorings, the only color incident to $u$ in
both graphs is the color $c$. So, it is important to consider $c$-degree of
$u$ and its neighbors in $G_{u}$ and in $G_{0}$, which will be denoted by
$d_{u}^{c}(u)$ and $d_{0}^{c}(u)$ respectively.

Notice that $d_{u}^{c}(u)=p,$ and the $c$-degree of the neighbors of $u$ in
$G_{u}$ is at most one. When $G_{u}$ and $G_{0}$ are glued together, the
$c$-degree of $u$ becomes $p+1$. This will not make $c$-colored edges of
$G_{u}$ incident to $u$ to become locally regular, the only problem is the
edge $u_{1}u$ in $G_{0}.$ If $d_{0}^{c}(u_{1})\not =p+1,$ then $\phi
_{a,b,c,d}^{0}+\phi_{a,b,c}^{u}$ is the desired $4$-liec of $G.$ If $d_{0}%
^{d}(u_{2})\not =p+1,$ we can swap colors $c$ and $d$ in $G_{0}$, so the case
reduces to the previos case $d_{0}^{c}(u_{1})\not =p+1.$ Finally, if
$d_{0}^{c}(u_{1})=d_{0}^{d}(u_{2})=p+1,$ we have to make a small modification
of $\phi_{a,b,c}^{u}.$ Let $\phi_{a,b,c,d}^{\prime u}$ be the coloring of
$G_{u}$ obtained from $\phi_{a,b,c}^{u}$ by changing only the color of
$uv_{1}$ from $c$ to $d.$ Notice that $\phi_{a,b,c,d}^{\prime u}$ is not a
liec of $G_{u},$ since $uv_{1}$ is an isolated edge in color $d$, but
$\phi_{a,b,c,d}^{0}+\phi_{a,b,c,d}^{\prime u}$ is a $4$-liec of $G,$ which
proves the claim.

\medskip\noindent\textbf{Case 2b: }$p=1.$ Let $G_{0}^{\prime\prime}%
=G_{0}+uv_{1}.$ If $G_{0}^{\prime\prime}$ is colorable, by induction
hypothesis it admits a $4$-liec $\phi_{a,b,c,d}^{\prime\prime0}$. Notice that
$d_{G_{0}^{\prime\prime}}(u)=d_{G_{0}}(u)+1\leq3$. Now we argue that $u$ is
$1$- or $2$-chromatic by $\phi_{a,b,c,d}^{\prime\prime0},$ and to see this we
distinguish the case $d_{G_{0}}(u)=1$ and $d_{G_{0}}(u)=2.$ If $d_{G_{0}%
}(u)=1,$ then $d_{G_{0}^{\prime\prime}}(u)=2,$ so the claim obviously holds.
If $d_{G_{0}}(u)=2,$ then $d_{G_{0}^{\prime\prime}}(u)=3,$ but in this case
$u$ is a vertex of a cycle in $G_{0}^{\prime\prime}$ with a pendant edge
$uv_{1}.$ In order for an edge coloring of $G_{0}^{\prime\prime}$ to be
locally irregular, the color of the pendant edge $uv_{1}$ must be the same as
the color of at least one more edge incident to $u$. From this we deduce that
$u$ is $1$- or $2$-chromatic by $\phi_{a,b,c,d}^{\prime\prime0},$ as claimed.
Hence, we may assume $\phi_{a,b,c,d}^{\prime\prime0}(u)\cup\phi_{a,b,c,d}%
^{\prime\prime0}(v_{1})\subseteq\{c,d\}.$ Notice that $G$ consists of
$G_{0}^{\prime\prime}$ and $T$ which meet at $u$ and $v_{1}.$ From
(\ref{For_abT}) and $\phi_{a,b,c,d}^{\prime\prime0}(u)\cup\phi_{a,b,c,d}%
^{\prime\prime0}(v_{1})\subseteq\{c,d\}$ we conclude $\phi_{a,b,c,d}%
^{\prime\prime0}+\phi_{a,b,c}^{T}$ is the desired $4$-liec of $G.$

If $G_{0}^{\prime\prime}$ is not colorable, then the presence of a leaf in
$G_{0}^{\prime\prime}$ implies $G_{0}^{\prime\prime}\in\mathfrak{T}$. Then
again the claim follows from Lemma \ref{Lemma_nonColorable} and the induction hypothesis.
\end{proof}

\section{Concluding remarks}

In this paper we established that all colorable cacti require at most $4$
colors for a locally irregular edge coloring. This is the best possible upper
bound, since there exists the so called bow-tie graph $B,$ which is a
colorable cactus graph with $\chi_{%
%TCIMACRO{\TeXButton{TeX field}{\rm{irr}}}%
%BeginExpansion
\rm{irr}%
%EndExpansion
}^{\prime}(B)=4.$ This result can be further extended to a claim that every
colorable cactus graph distinct from $B$ requires at most three colors for the
locally irregular edge coloring and a paper with this result is in preparation.

Our argument of this claim is lengthy but uses the same approach as Theorem
\ref{Theorem_main}. The main difference is that in Case 2.a of Theorem
\ref{Theorem_main} we do not have to take much care about $a$- and $b$-degrees
of the neighbors of $u$ in $T$ since we have fourth color $d$ to use it for at
least one of the two edges incident to $u$ in $G_{0}.$ When the fourth color
must not be used, then a great care has to be taken of these $a$- and
$b$-degrees in $T$ because the same colors must be used for both edges
incident to $u$ in $G_{0}$. So, one has to avoid colors $a$ and $b$ in $G_{u}$
to spare them for $G_{0},$ i.e. color all edges of $G_{u}$ incident to $u$ by
$c$ and not just $E_{u}.$ That is not always possible, so special berries and
alternative colorings for them need to be introduced. In light of all this, it
might be helpful for a reader interested into this to consider Theorem
\ref{Theorem_main} as a first step.

\bigskip\noindent\textbf{Acknowledgments.}~~Both authors acknowledge partial
support of the Slovenian research agency ARRS program\ P1-0383 and ARRS
project J1-1692. The first author also the support of Project
KK.01.1.1.02.0027, a project co-financed by the Croatian Government and the
European Union through the European Regional Development Fund - the
Competitiveness and Cohesion Operational Programme.

\end{document}